\documentclass[12pt,leqno]{amsart}

 \usepackage[utf8]{inputenc} 
\usepackage{graphicx}
\usepackage{indentfirst,csquotes}
\usepackage{float}

\usepackage{amssymb,amsthm,amsmath}
\usepackage{xcolor,paralist,hyperref,fancyhdr,etoolbox}
\newtheorem{theorem}{Theorem}[]
\newtheorem{definition}[theorem]{Definition}

\newtheorem{lemma}[theorem]{Lemma}
\newtheorem{proposition}[theorem]{Proposition}

\newtheorem{remark}[theorem]{Remark}

\hypersetup{ colorlinks=true, linkcolor=black, filecolor=black, urlcolor=black }

\def \C{\mathbb{C}}
\def \K{\mathbb{K}}
\def\id{\mathrm{Id}}
\def\su{\mathfrak{su}}
\def\sp{\mathfrak{sp}}
\def\spin{\mathfrak{spin}}
\def\f4{\mathfrak{f}_4}
\def\g{\mathfrak{g}}
\def\k{\mathfrak{k}}

\def\Aut{\mathrm{Aut}}
\def\R{\mathbb{R}}
\def\C{\mathbb{C}}
\def\H{\mathbb{H}}
\def\O{\mathbb{O}}
\def\Ric{\mathrm{Ric}}
\def\P{\mathrm{P}}

\def\be{\begin{equation}}
\def\ee{\end{equation}}

\begin{document}
\title{The Index of Cubic Focal Manifolds} 

\author{Niklas Rauchenberger, Uwe Semmelmann}

\address{Niklas Rauchenberger, Institut f\"ur Geometrie und Topologie, Fachbereich Mathematik, Universit{\"a}t Stuttgart, Pfaffenwaldring 57, 70569 Stuttgart, Germany}
\email{niklas.rauchenberger@igt.uni-stuttgart.de}

\address{Uwe Semmelmann, Institut f\"ur Geometrie und Topologie, Fachbereich Mathematik, Universit{\"a}t Stuttgart, Pfaffenwaldring 57, 70569 Stuttgart, Germany
}
\email{uwe.semmelmann@mathematik.uni-stuttgart.de}

\date{\today}

\let\thefootnote\relax
\footnotetext{MSC2020: Primary 53A10, Secondary 53C35.} 

\begin{abstract}
We calculate the index and nullity of the three orientable focal manifolds of isoparametric hypersurfaces in spheres with three distinct principal curvatures. It turns out that the index is equal to the dimension of the ambient Euclidean space and the nullity is  completely 
determined by the normal part of Killing vector fields of the ambient sphere. In that sense, the Veronese embeddings of the projective planes are as stable as possible for non totally geodesic submanifolds of the sphere.
\end{abstract}

\maketitle

\section{Introduction}
Minimal submanifolds are critical points of the volume functional. A natural question to ask is whether a given minimal submanifold realizes
a local minimum of the functional. This can be answered by studying the Jacobi operator, a second order differential operator that arises when computing the second variation of the volume functional. More precisely, one is interested in the index and nullity of the minimal submanifold, quantities that count the number of eigenvalues of this operator which are negative and zero, respectively. It is an interesting problem to study certain families of minimal submanifolds in ambient manifolds that carry nice geometric structures. As a classical result, Simons computed the index and nullity of (totally geodesic) spheres in higher-dimensional ambient spheres in his seminal work \cite{Simons1968}. Further examples include certain classes of totally geodesic submanifolds in compact symmetric spaces considered by Ohnita in \cite{Ohnita1987}, Kimura in \cite{Kimura2008}, as well as minimal submanifolds in Berger spheres (see Torralbo, Urbano \cite{Torralbo2022}).

Other examples of  minimal submanifolds to study are minimal isoparametric hypersurfaces in spheres. These are classical objects that possess a rich geometric structure.
As one of their most important properties, Münzner showed in \cite{Muenzner1980} that they can only have $g=1,2,3,4,6$ distinct principal curvatures, i.e., eigenvalues of the shape operator. In the cases $g=1,2$, corresponding to totally geodesic spheres and the Clifford torus, the index and the nullity have
already been calculated in the aforementioned work of Simons. If $g=3$, the situation becomes more involved. In \cite{Solomon1990}, Solomon calculated index and nullity in this case (see Table \ref{Table Solomon}). There is not much known about the index and nullity in the cases $g=4$ and $g=6$ apart from a result by Ball, Madnick, Semmelmann
where two specific examples are considered  for $g=4$ (see \cite{Ball2023}). Homogeneous (and thus all) isoparametric hypersurfaces in spheres with $g=3$
were already classified by Cartan in \cite{Cartan1939}. In the case $g=4$ there are also many inhomogeneous examples, most of which can be described by a construction of Ferus, Karcher, M\"unzner (see \cite{Ferus1981}). For $g=6$, the only inhomogeneous examples occur in the sphere $\mathbb{S}^{31}$ and this is also the last remaining open case in the classification of isoparametric hypersurfaces in spheres (see \cite{Siffert2016}, where a counterexample to Miyaoka's proof of the classification is given).

In this article, we  study focal submanifolds of isoparametric hypersurfaces in spheres. These are certain minimal submanifolds that are closely related to the isoparametric hypersurfaces (see Section 2.2 for details). In the cases of $g=1,2$ they are trivial -- that is, they reduce to single points or spheres -- but for $g=3$, which we will call the cubic case, these submanifolds have a rich structure: they are the classical Veronese embeddings of projective planes $\K \P^2$ over the four normed division algebras $\mathbb{K}=\mathbb{R},\mathbb{C},\mathbb{H},\mathbb{O}$ (we will exclude $\R \P^2$, however, as this space is not orientable). 

The main result of our article is the computation of  the index $  \mathrm{Ind}(\K \P^2) $, the nullity $\mathrm{Nul}(\K \P^2)$
 and the Killing nullity $\mathrm{Nul}_{\mathrm{K}}(\K \P^2)$, which arises from the normal parts of ambient Killing vector fields and gives a lower bound for the nullity.
 
\begin{theorem}
\label{Main Theorem}
For each orientable cubic focal manifold $\K \P^2\subset\mathbb{S}^{n}$, $n = \frac{3}{2}d+1$, of dimension $d=4,8,16$, we have
\begin{itemize}
\item[(a)]    $\mathrm{Ind}(\K \P^2) = n+1$.
\medskip
\item[(b)]     Furthermore, we have
\begin{itemize}
\item[(i)] $\mathrm{Nul}(\C \P^2)=\mathrm{Nul}_{\mathrm{K}}(\C \P^2)=20$,
\item[(ii)] $\mathrm{Nul}(\H \P^2)=\mathrm{Nul}_{\mathrm{K}}(\H \P^2)=70$,
\item[(iii)] $\mathrm{Nul}(\O \P^2)=\mathrm{Nul}_{\mathrm{K}}(\O \P^2)=273$.
\end{itemize}
\end{itemize}
\end{theorem}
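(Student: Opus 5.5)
Since $M=\K\P^2\subset\mathbb{S}^n$ is a homogeneous minimal submanifold, I would compute the spectrum of the Jacobi operator $J=\Delta^\perp-\mathcal{A}-d$ on sections of the normal bundle $\nu M$ by harmonic analysis. Here $\mathcal{A}(\xi)=\sum_{i,j}\langle B(e_i,e_j),\xi\rangle B(e_i,e_j)$ is Simons' operator.

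\textbf{Setting up the homogeneous model.} Write $M=G/K$ with $(G,K)=(SU(3),S(U(2)\times U(1)))$, $(Sp(3),Sp(2)\times Sp(1))$ and $(F_4,Spin(9))$. The normal bundle $\nu M$ is a homogeneous bundle associated to a $K$-representation $\mathfrak{n}$ of rank $n-d=d/2+1$. For $\C\P^2$ it is $\mathfrak{su}(2)\otimes\C$-related; for $\O\P^2$ it is the vector representation $\R^9$ of $Spin(9)$. I would identify $\mathfrak{n}$ and check that the normal connection is the canonical reductive connection, since the focal manifolds are symmetric spaces of rank one. Then the standard formula $\Delta^\perp = \mathrm{Cas}^G - \mathrm{Cas}^K_{\mathfrak{n}}$ on the isotypic components $V_\gamma\otimes\mathrm{Hom}_K(V_\gamma,\mathfrak{n})$ applies, with suitably normalized metrics.

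Next I would compute $\mathcal{A}$. For the Veronese embeddings the second fundamental form is equivariant and $\mathfrak{n}$ is $K$-irreducible, so Schur's lemma gives $\mathcal{A}=c\cdot\mathrm{Id}$. The constant comes from $\mathrm{tr}\,\mathcal{A}=|B|^2$, which the Gauss equation expresses via scalar curvature: $|B|^2 = d(d-1)-\mathrm{scal}$. Since the principal curvatures are explicit ($\pm\sqrt3$ and $0$ for the hypersurface, giving the focal shape operators eigenvalues $\pm\sqrt3,0$ with multiplicities $d/2,d/2$), $c$ is computable. Altogether $J$ acts on the $\gamma$-isotypic part as the scalar $\mathrm{Cas}^G_\gamma - \mathrm{Cas}^K_{\mathfrak{n}} - c - d$.

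\textbf{Finding the negative and zero eigenvalues.} By Frobenius reciprocity only the $G$-irreducibles $V_\gamma$ containing $\mathfrak{n}$ upon restriction to $K$ occur. The negative eigenvalues should come only from the lowest such representation, the isotropy representation on $\R^{n+1}$ itself, which has dimension $9,15,27$, i.e.\ $n+1$. On this piece the sections are the normal projections of the constant vectors $a\in\R^{n+1}$. Their eigenvalue is $-d$, computable directly from the general identity $J(a^\perp)=-d\,a^\perp$ for minimal submanifolds of spheres. So $\mathrm{Ind}\ge n+1$, with equality once all other admissible $\gamma$ have nonnegative eigenvalue. The zero eigenvalue should come exactly from the adjoint representation $\mathfrak{g}$ (dimension $8,21,52$) and from the representation realizing normal parts of Killing fields of $\mathbb{S}^n$, i.e.\ $\mathfrak{so}(n+1)=\Lambda^2\R^{n+1}$ modulo $\mathfrak g$. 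This should reproduce $\mathrm{Nul}_{\mathrm K}=\dim\mathfrak{so}(n+1)-\dim\mathfrak g$, namely $36-16=20$, $105-35\cdot$ (to be checked) $=70$, and $351-78=273$. Here the quotient kernel is the stabilizer, whose Killing fields are tangent; for $\O\P^2$ the stabilizer is $F_4$ and $\Lambda^2\R^{27}=\mathfrak f_4\oplus V_{273}$. I would verify that the multiplicity of $\mathfrak n$ in each relevant $V_\gamma|_K$ equals one.

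\textbf{Main obstacle.} The hard step is the branching and Casimir bookkeeping: showing that every other $G$-irreducible $V_\gamma$ with $\mathrm{Hom}_K(V_\gamma,\mathfrak n)\ne0$ has $\mathrm{Cas}^G_\gamma > \mathrm{Cas}^K_{\mathfrak n}+c+d$. I would first try to use the structure of rank-one symmetric spaces, where $K$-spherical-type branching to a fixed $\mathfrak n$ is governed by a two-parameter family of highest weights. I would then show monotonicity of the Casimir in those parameters, so that only finitely many small weights need explicit checking via Freudenthal's formula. This is especially delicate for $F_4\supset Spin(9)$, where the branching to $\R^9$ must be done carefully, perhaps via the $\mathfrak{so}(9)\subset\mathfrak f_4$ root data.
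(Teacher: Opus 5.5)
\textbf{Verdict: genuine gap.} Your framework is the paper's: on each isotypic piece $V_\gamma\otimes\mathrm{Hom}_K(V_\gamma,\mathfrak n)$ the Jacobi operator is $\mathrm{Cas}^G_\gamma$ minus a constant. But the result rests on two things, the value of that constant and the list of $G$-types that occur with their Casimir eigenvalues. The first is wrong in your proposal and the second is missing.

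\emph{The constant.} The focal shape operators do not have eigenvalues $\pm\sqrt3,0$; those belong to the minimal hypersurface. For the focal manifold they are $\cot(\pi/3)=\frac{1}{\sqrt3}$ and $\cot(2\pi/3)=-\frac{1}{\sqrt3}$, each of multiplicity $d/2$. Moreover $\sqrt3A_{\xi_0},\dots,\sqrt3A_{\xi_{d/2}}$ form a Clifford system, so $\mathcal A=\frac d3\,\mathrm{Id}$. Your values would give $\mathrm{tr}\,A_\xi^2=\frac{3d}{2}$ or more instead of $\frac d3$.

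\emph{The metric scale.} You never fix the scale of the induced metric against the Killing form. Without it, $\mathrm{Cas}^G$ and $\mathrm{Cas}^K_{\mathfrak n}$ cannot be compared with $\mathcal A$ and $\mathrm{Ric}^\perp=-d$. The paper gets the scale from the Gauss equation with the correct $|\mathrm{II}|^2=\frac d3(\frac d2+1)$, giving $b_\C=\frac14 b_{\mathfrak{su}(3)}$, $b_\H=\frac3{32}b_{\mathfrak{sp}(3)}$, $b_\O=\frac1{24}b_{\mathfrak f_4}$. Freudenthal's formula then gives $\mathrm{Cas}^K_{\mathfrak n}=\frac23 d$, hence $\mathcal J=\mathrm{Cas}^G-2d$. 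Your own two known eigenvectors could replace this computation. Since $J=\mathrm{Cas}^G-C$, the relations $J a^\perp=-d\,a^\perp$ and $J=0$ on normal parts of Killing fields determine both the scale and $C$, once you know the Casimirs of $\R^{n+1}$ and of $\Lambda^2\R^{n+1}/\mathfrak g$.

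\emph{The spectral step.} The decisive claim is that the only $G$-type in $L^2(T^\perp M)$ with Casimir below $2d$ is the one of dimension $n+1$, and that those at exactly $2d$ have total dimension $\mathrm{Nul}_{\mathrm K}$. This is exactly what you leave as the ``main obstacle'', with only a sketch of how the branching might go. The paper does not derive branching rules. It reads off the occurring $G$-types and their eigenvalues from the spectral computations of Ikeda, Tsukamoto and Mashimo, rescaled to $b_\K$. These form finitely many one-parameter families, not a two-parameter family. Without this step neither $\mathrm{Ind}\le n+1$ nor $\mathrm{Nul}\le\mathrm{Nul}_{\mathrm K}$ is established.

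Several identifications are also off.
\begin{itemize}
\item The ambient space is the traceless part of $\mathrm{Herm}_3(\K)$, so $n+1=8,14,26$. Your $9,15,27$ would be $n+2$.
\item For $\C\P^2$ this $\R^8$ is the adjoint representation $\mathfrak{su}(3)$. So the adjoint representation carries the negative eigenvalue $-d$ and does not contribute to the nullity.
\item For $\H\P^2$ and $\O\P^2$ the adjoint representation does not occur in $L^2(T^\perp M)$ at all.
\item The Killing nullity is $\dim\mathfrak{so}(n+1)-\dim\mathfrak g$, i.e.\ $28-8$, $91-21$ and $325-52$, with $\Lambda^2\R^{26}=\mathfrak f_4\oplus V_{273}$. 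Your $36-16$, $105-35$, $351-78$ start from $\mathfrak{so}(9),\mathfrak{so}(15),\mathfrak{so}(27)$ and reach the right numbers only by coincidence.
\item The decomposition $\Lambda^2\R^{27}=\mathfrak f_4\oplus V_{273}$ fails on dimensions, since $351\neq 52+273$.
\end{itemize}
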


\medskip
%

Simons proved in \cite{Simons1968}, Theorem 5.1.1, that the index of any compact minimal submanifold in the sphere is bounded
from below by its codimension, with equality  attained precisely for totally geodesic spheres. In the non-totally geodesic case, this
result was improved by El Soufi in  \cite{ElSoufi1993}. For a compact minimal hypersurface $M \subset \mathbb{S}^n$ he proved that
the lower bound of the index is $n+2$, with equality  attained on the Clifford torus (see  \cite{ElSoufi1993}, Theorem 2.2).
In the general case of a compact minimal submanifold in $\mathbb{S}^n$, El Soufi  proved the lower bound $n+1$ 
 (see  \cite{ElSoufi1993}, Theorem 2.1). He also poses the question whether equality can be attained. 
Our Theorem \ref{Main Theorem} answers this question and shows that the cubic focal manifolds  $\K \P^2$
 realize the equality case in the estimate, i.e., in a sense, they are as little unstable as possible.
 The $n+1$ destabilizing directions come from restricting the $n+1$ linearly independent  parallel vector fields of $\R^{n+1}$ to the sphere $\mathbb{S}^n$ and projecting them to the normal space of the submanifold (see  \cite{Simons1968}, Lemma 5.1.4). In the
 non-totally geodesic case, it is shown in \cite{ElSoufi1993} that these projections remain linearly independent.
 
 \medskip
 
 Finally, we also note the interesting fact that the nullities of the cubic focal manifolds given in 
 Theorem \ref{Main Theorem} agree with those of the corresponding minimal isoparametric hypersurfaces
(see Table \ref{Table Solomon} and \cite{Solomon1990}).

\medskip

The paper is structured as follows. In Section \ref{Preliminaries} we recall the notions of index and nullity of minimal submanifolds as well as some of the most important geometric properties of isoparametric hypersurfaces and their focal manifolds. Furthermore, we have a more detailed look in the case $g=3$. We also summarize basic results of harmonic analysis relevant to our application and present relations between the Laplacian and Casimir operators which will be our main technical tool for the computations. In Section \ref{Computation of the Index} we rewrite the Jacobi operator of the focal submanifolds as a sum of Casimir operators. We then use spectral computations from the literature to determine index and nullity.\\

\textbf{Acknowledgments. }
The second author would like to thank Gavin Ball and Jesse Madnick for initial discussions on focal manifolds, and in particular for the opportunity to pursue the cubic case independently. We are grateful to Marco Radeschi for valuable discussions related to the subject of this paper. We also thank Ivan Solonenko for a careful reading of the manuscript and for numerous suggestions that improved the presentation. Finally, we thank Jesse Madnick and
Jakob Hecker for helpful comments.

\section{Preliminaries}
\label{Preliminaries}

\subsection{Minimal Submanifolds}
Let $(\overline{M},\bar{g})$ be an $n$-dimensional Riemannian manifold and let $M$ be an isometrically immersed $d$-dimensional orientable compact submanifold of $\overline{M}$. Denote by ${\varphi:M\rightarrow\overline{M}}$ the isometric immersion and let $g:=\varphi^{*}\bar{g}$ be the induced metric on $M$. We will always write $\overline{\nabla}$, $\overline{R}$ for the Levi-Civita connection and curvature of $(\overline{M}, \bar{g})$ and $\nabla$, $R$ for those of $(M, g)$. Consider a smooth $1$-parameter variation ${\varphi_t}$ of the immersion $\varphi$ with $|t|<\varepsilon$ such that $\varphi_0=\varphi$. We call $(M,g)$ a \emph{minimal} submanifold of $(\overline{M},\overline{g})$ if the first derivative of the volume functional $\frac{d}{dt}\mathrm{vol}((M,\varphi_t^{*}\overline{g}))|_{t=0}$ vanishes. This is equivalent to  the vanishing of the mean curvature vector field of $M$. Naturally, one is then interested in the formula for the second variation for minimal submanifolds. Denote by $V :=\frac{d\varphi_t}{dt}|_{t=0}$ the variational vector field along $\varphi$, then one can show (see e.g. \cite{Simons1968}, Section 3.2)
\begin{equation*}
\frac{d^2}{dt^2}\mathrm{vol}((M,\varphi_t^{*}\overline{g}))|_{t=0}=\int_M g\big(-\Delta^{\perp}V^{\perp} + \mathrm{Ric}^{\perp}(V^{\perp})-\mathcal{A}(V^{\perp}),V^{\perp}\big)\,d\mathrm{vol}.
\end{equation*}
\label{Second Variational Formula}Here, $-\Delta^{\perp}=(\nabla^{\perp})^{*}\nabla^{\perp}$, where $^{*}$ denotes taking the $L^2$-adjoint, is the normal Laplacian defined from the normal connection $\nabla_X^{\perp}\xi:=(\overline{\nabla}_X\xi)^{\perp}$ for vector fields $X\in\Gamma(TM)$
and sections  $\xi\in\Gamma(T^\perp M)$ of the normal bundle of $M \subset \overline M$. Furthermore, for a local orthonormal frame $\{X_i\}$ of $TM$, the second summand is a Ricci-like curvature term, defined as  $\mathrm{Ric}^{\perp}(\xi):=\sum_{i=1}^d(\overline{R}(X_i,\xi)X_i)^{\perp}$.
 The third summand $\mathcal{A}(\xi):=\sum_{i=1}^d\mathrm{II}(A_{\xi}X_i,X_i)$ involves the second fundamental form $\mathrm{II}$ of $M$ in $\overline{M}$ as well as the shape operator $A_{\xi}X:=-(\overline{\nabla}_X\xi)^{\top}$ with a normal vector field $\xi$ and a tangent vector   $X$. We introduce the \emph{Jacobi operator} $\mathcal{J}$ acting on sections of the normal bundle $T^\perp M$ as
\begin{equation*}
\mathcal{J}:=-\Delta^{\perp}+\mathrm{Ric}^{\perp}-\mathcal{A}.
\end{equation*}
\label{Jacobi Operator}The Jacobi operator is an elliptic self-adjoint operator. Hence, on a compact manifold, it has a discrete spectrum with eigenvalues of finite multiplicity. The {\it index} of $M$ is defined as the number of negative eigenvalues (counted with multiplicity) and  denoted by $\mathrm{Ind}(M)$, which corresponds to directions in the normal bundle where the volume of $M$ decreases. Moreover, the \emph{nullity} of $M$ is defined as the multiplicity of the eigenvalue $0$ and  denoted by $\mathrm{Nul}(M)$. Killing fields on $\overline{M}$ (restricted to $M$ and projected onto the normal bundle) always give volume preserving variations and thus contribute to the nullity of $M$. This motivates the definition of the \emph{Killing nullity} of $M$, denoted by $\mathrm{Nul}(M)_{\mathrm{K}}$, as
the dimension of the space of normal components of  Killing fields of  $(\overline{M}, \bar g)$.

\subsection{Geometry of Isoparametric Hypersurfaces and their Focal Manifolds}\label{focal1}
In this section, we will recall some fundamental properties of isoparametric hypersurfaces in spheres and their focal manifolds. We discuss their geometry in general before we specialize to the cubic case in the next section. The facts  presented here  and more details can be found for example in the books \cite{Berndt2016}, Section 2.9 and 3.4.2, \cite{Cecil2015}, Section 3 or in the articles \cite{Li2015}, \cite{Li2016}, \cite{Tang2013}.
\begin{definition}
An \emph{isoparametric function on} $\mathbb{S}^n$ is a nonconstant smooth function $f:\mathbb{S}^n\rightarrow\mathbb{R}$ such that $|\nabla f|^2=a\circ f$ for some smooth function $a:I\rightarrow\mathbb{R}$ and $\Delta f=b\circ f$ for some continuous function $b:I\rightarrow\mathbb{R}$, where $I=f(\mathbb{S}^n)\subset\mathbb{R}$ is an interval.
\end{definition}
One can show that each point $c$ in the interior of $I$ is a regular value of $f$. The regular level set $M_c:=f^{-1}(c)=\{p\in\mathbb{S}^n\,|\,f(p)=c\}$ is called an \emph{isoparametric hypersurface} of $\mathbb{S}^n$. The collection $\{f^{-1}(c)\}_{c\in I}$ is called an \emph{isoparametric family}. One can define isoparametric hypersurfaces in Euclidean or hyperbolic space in an analogous fashion. In these spaces isoparametric hypersurfaces are well understood and fully classified. However, in the spherical case, the study of isoparametric hypersurfaces has been much more convoluted. Cartan started this investigation but he was only able to classify them in the cubic case and even today the classification is not entirely completed (see \cite{Chi2020} for more details on the history).

Isoparametric hypersurfaces in spheres -- like in all space forms -- have constant principal curvatures, i.e., eigenvalues of the shape operator, by a theorem of Cartan (see \cite{Cartan1938}). M\"unzner showed that the number $g$ of distinct principal curvatures can only be $g=1,2,3,4,6$. We can write them as $\cot\theta_i$ for some $\theta_1,\ldots,\theta_g$. Then, by a result of Münzner (see \cite{Muenzner1980}), we have
\begin{equation*}
\theta_i=\theta_1+\frac{(i-1)}{g}\pi,\quad 1\leq i\leq g.
\end{equation*}
The multiplicities of these eigenvalues satisfy $m_i=m_{i+2}$ (with the index $i\,\mathrm{mod}\,g$). Now, let $\xi$ be a unit normal vector field and consider the map
\begin{equation*}
\varphi_t:M_c\rightarrow\mathbb{S}^n,  x \mapsto \,(\cos t)x+(\sin t)\xi(x).
\end{equation*}
If $\cot t$ is not a princial curvature of $M_c$, then $\varphi_t(M_c)$ is also an isoparametric hypersurface of $\mathbb{S}^n$, called \emph{parallel} to $M_c$, with principal curvatures $\cot(\theta_1-t),\ldots,\cot(\theta_g-t)$. If however, $\cot t=\cot \theta_i$ for some $i=1,\ldots , g$, then $\varphi_t(M_c)$ is a submanifold of higher codimension $m_i+1$, called a \emph{focal submanifold} of $\mathbb{S}^n$, with principal curvatures $\cot(\frac{k\pi}{g})$ for $1\leq k\leq g-1$. One of the important properties of focal submanifolds is that they have isospectral shape operators and are austere (i.e., for every eigenvalue $\lambda$ for any of its shape operators, $-\lambda$ is also an eigenvalue), thus they are  minimal (\cite{Muenzner1980}).

As it turns out, the isoparametric functions $f$ on the sphere $\mathbb{S}^n$ are given by restrictions of homogeneous polynomials on the ambient $\mathbb{R}^{n+1}$ of degree $g$, that satisfy certain differential equations (see \cite{Muenzner1980}). They are called the Cartan-Münzner polynomials. As one of their properties, their range is always $[-1,1]$. Furthermore, in each isoparametric family there are always exactly the two focal submanifolds $M_{-}:=f^{-1}(-1)$ and $M_{+}:=f^{-1}(1)$ corresponding to the singular values of the isoparametric function ${f:\mathbb{S}^n\rightarrow\mathbb{R}}$. It follows easily from the explicit form of their principal curvatures, that they are minimal submanifolds and furthermore that there is exactly one isoparametric hypersurface $f^{-1}(c_0)$ for some ${c_0\in(-1,1)}$ in the isoparametric family, that is  minimal too.

In this article we will  be concerned with those isoparametric families that are homogeneous, i.e., that are the orbits of some isometric cohomogeneity-one action on the ambient sphere. A result by Hsiang and Lawson (see \cite{Hsiang1971}, Theorem 5) shows that a isoparametric hypersurface in $\mathbb{S}^n$ is homogeneous if and only if it is a principal orbit of the isotropy representation of a Riemannian symmetric space of rank two.


\subsection{Cubic Focal Manifolds}

From now on we want to specialize to the situation of hypersurfaces with $g=3$ distinct principal curvatures and their focal manifolds. In this case, Cartan showed that there are only four minimal isoparametric hypersurfaces:
\bgroup
\def\arraystretch{1.5}
\begin{table}[h]
\begin{tabular}{|c|c|c|}
\hline
\textbf{Minimal Isoparametric Hypersurface}                                              & \textbf{Index} & \textbf{Nullity} \\ \hline \hline $\frac{\mathrm{SO}(3)}{\mathbb{Z}_2\times\mathbb{Z}_2}\subset\mathbb{S}^4$  & $20$             & $7$                \\ \hline
$\,\frac{\mathrm{SU}(3)}{T^2}\,\subset\mathbb{S}^7$                              & $44$             & $20$               \\ \hline
$\,\,\,\frac{\mathrm{Sp}(3)}{\mathrm{Sp}(1)^3}\subset\mathbb{S}^{13}$               & $119$            & $70$               \\ \hline
$\,\,\frac{\mathrm{F}_4}{\mathrm{Spin}(8)}\subset\mathbb{S}^{25}$                                                           & $377$            & $273  $            \\ \hline
\end{tabular}\\[1ex]
\caption{Minimal isoparametric hypersurfaces of the sphere with $g=3$ distinct principal curvatures and their index and nullity (see \cite{Solomon1990}).}
\label{Table Solomon}
\end{table}
\egroup

Furthermore, Cartan showed that the focal manifolds are diffeomorphic to $\mathbb{K}\P^2$ via the Veronese embeddings (see Table \ref{Table Cubic Focal Manifolds}). For this reason, we will always denote these focal manifolds as $\K \P^2$ with $\mathbb{K}\in\{\mathbb{R},\mathbb{C},\mathbb{H},\mathbb{O}\}$.

\bgroup
\def\arraystretch{1.5}
\begin{table}[H]
\begin{tabular}{|c|c|}
\hline
\textbf{Focal Manifold} & \textbf{Minimal Isoparametric Hypersurface} \\ \hline \hline
$\R \P^2=\frac{\mathrm{SO}(3)}{\mathrm{O}(2)}\subset\mathbb{S}^4$                                    & $\frac{\mathrm{SO}(3)}{\mathbb{Z}_2\times\mathbb{Z}_2}$         \\ \hline
$\C \P^2=\frac{\mathrm{SU}(3)}{\mathrm{U}(2)}\subset\mathbb{S}^7$                                    & $\frac{\mathrm{SU}(3)}{T^2}$                                     \\ \hline
\,\,\,\,\,\,\,\,\,\,\,\,\,$\H \P^2=\frac{\mathrm{Sp}(3)}{\mathrm{Sp}(2)\cdot\mathrm{Sp}(1)}\subset\mathbb{S}^{13}$                & $\frac{\mathrm{Sp}(3)}{\mathrm{Sp}(1)^3}$                       \\ \hline
\,\,\,\,\,$\O \P^2=\frac{\mathrm{F}_4}{\mathrm{Spin}(9)}\subset\mathbb{S}^{25}$                                  & $\frac{\mathrm{F}_4}{\mathrm{Spin}(8)}$                 \\ \hline
\end{tabular}\\[1ex]
\caption{Focal manifolds of minimal isoparametric hypersurfaces of spheres with $g=3$ distinct principal curvatures.}
\label{Table Cubic Focal Manifolds}
\end{table}
\egroup

\vspace{-.8cm}

\begin{remark}
In the following we will exclude $\R \P^2$ since it is non-orientable. One can also define index and nullity of non-orientable submanifolds, but then the  volume form has to be replaced by the Riemannian density. We refer to \cite{Montiel1997}, where  the case of $\R \P^2$ has already been (implicitly) dealt with.
\end{remark}

\medskip

The normal bundle of the $d$-dimensional focal manifold $\K \P^2 = G/K$ has rank $\frac{d}{2} + 1$. It is isomorphic to a homogeneous vector bundle $G\times_{\pi}W$, where the (irreducible) slice representations $\pi:K\rightarrow \Aut (W)$ can be read off from Tables E and F in \cite{Grove2008}:
\begin{equation}
\label{slice representations}
\pi:\mathrm{SU}(2)\rightarrow\mathrm{Aut}(\mathfrak{su}(2)),\quad\pi:\mathrm{Sp}(2)\rightarrow\mathrm{Aut}((\Lambda^2_0(\mathbb{C}^4))_{\mathbb{R}}),\quad\pi:\mathrm{Spin}(9)\rightarrow\mathrm{Aut}(\mathbb{R}^9).
\end{equation}

\medskip

For  $g=3$  the shape operators $A_\xi$  of the focal manifolds  $\K \P^2$ all have eigenvalues 
$\pm\frac{1}{\sqrt{3}}$, where  $\xi$ is a normal vector field of length one. 
Moreover, one can show (see \cite{Console1998}, Section 2) that for an orthonormal basis $\xi_0, \ldots ,\xi_{\frac{d}{2}}$
of $T^\perp_p \K \P^2$ with $p\in \K \P^2$, the symmetric endomorphisms $P_0:=\sqrt{3}A_{\xi_0},\ldots,P_{\frac{d}{2}}:=\sqrt{3}A_{\xi_{\frac{d}{2}}}$ 
of a tangent space $T_p \K \P^2$  form a Clifford system, i.e. ,
\be\label{cliff}
P_i^2= \id \qquad \mbox{and} \qquad   P_iP_j+P_jP_i=0   \quad \mbox{for} \,\;  i \neq j.
\ee

%
%


\medskip

On each focal manifold $\K \P^2 = G/K$ we have two $G$-invariant metrics: the metric defined by the negative 
Killing form of $\g$, the Lie algebra of $G$, and the metric $g_\K$ induced from the round metric on the ambient sphere.
Both metrics are normal and, since the symmetric spaces $G/K$ are irreducible, they differ by a constant.
For our computations below we need the explicit value of this constant. Using \eqref{cliff}, we can deduce this information easily as the next lemma shows.

\begin{lemma}
\label{Focal Metrics and Normal Bundles}
For each orientable focal manifold $\K \P^2=G/K$, let $b_{\mathbb{K}}$ be the inner product on $\mathfrak{g}$ that induces the metric $g_{\mathbb{K}}$, and let $b_\g$ be the negative of the Killing form of $\mathfrak{g}$. Then,
\begin{equation*}
b_{\mathbb{C}}  =  \frac{1}{4}  \, b_{\su(3)},  \quad b_{\mathbb{H}}  =  \frac{3}{32} \,  b_{\sp(3)}, \quad 
b_{\mathbb{O}}  =  \frac{1}{24}\,  b_{\f4} .
\end{equation*}
\end{lemma}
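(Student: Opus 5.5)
The plan is to compare Einstein constants. Both $g_\K$ and the metric induced by $b_\g$ are $G$-invariant metrics on the irreducible symmetric space $G/K$, so they are proportional, say $b_\K = c_\K\, b_\g$. The Ricci tensor is invariant under constant rescaling of the metric. So if both metrics are Einstein, $\Ric = \lambda\, g_\K = \mu\, b_\g$, then $c_\K = \mu/\lambda$. It therefore suffices to compute the Einstein constant of each metric separately.

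For the Killing form metric I will use the standard fact for symmetric spaces of compact type. If the metric on $G/K$ is induced by $b_\g$ restricted to $\mathfrak{p}$, then $\Ric = \tfrac12 b_\g$. This follows from $R(X,Y)Z = -[[X,Y],Z]$, which gives $\Ric(X,X) = -\mathrm{tr}_{\mathfrak p}(\mathrm{ad}_X^2|_{\mathfrak p})$. Since $\mathrm{ad}_X$ for $X\in\mathfrak p$ swaps $\k$ and $\mathfrak p$, the traces of $\mathrm{ad}_X^2$ over $\k$ and over $\mathfrak p$ agree. Each is therefore half of $-B(X,X)$. Hence $\mu = \tfrac12$ in all three cases.

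For the induced metric $g_\K$ I will use the Gauss equation for the minimal submanifold $\K\P^2\subset\mathbb{S}^n$ of dimension $d$. For a unit tangent vector $X$ and an orthonormal normal frame $\xi_0,\dots,\xi_{d/2}$ it gives
\[
\Ric(X,X) = (d-1) + \sum_{\alpha} \big( \mathrm{tr}(A_{\xi_\alpha})\, g(A_{\xi_\alpha}X,X) - |A_{\xi_\alpha}X|^2 \big).
\]
The trace terms vanish by minimality. By \eqref{cliff}, $A_{\xi_\alpha}^2 = \tfrac13\id$ for each of the $\tfrac d2+1$ normal directions. Hence
\[
\Ric = \Big(d-1-\tfrac{d+2}{6}\Big) g_\K = \tfrac{5d-8}{6}\, g_\K .
\]
This gives $\lambda = 2,\ \tfrac{16}{3},\ 12$ for $d=4,8,16$.

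Finally, $c_\K = \mu/\lambda = 1/(2\lambda)$ yields $\tfrac14$, $\tfrac{3}{32}$ and $\tfrac{1}{24}$, which are the claimed constants. I do not expect any serious obstacle. The only points needing care are sign conventions: that $b_\g$ is the negative Killing form, and that the factor $\tfrac12$ in $\Ric=\tfrac12 b_\g$ is correct for the normalization of $b_\g$ on $\mathfrak p$. One must also check that the ambient sphere has curvature $1$, which holds for the unit sphere $\mathbb{S}^n$ carrying the Cartan--Münzner focal manifolds.
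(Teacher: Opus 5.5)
Your proposal is correct and is essentially the paper's proof: it also combines the Gauss equation, minimality and the Clifford relations \eqref{cliff} to obtain the curvature of $g_\K$, and compares this with the curvature of the negative-Killing-form metric on the symmetric space. The only difference is that the paper compares scalar curvatures, $d(d-1)-\frac{d}{3}\big(\frac d2+1\big)$ against $\frac{d}{2}$, which are just $d$ times your Einstein constants $\frac{5d-8}{6}$ and $\frac12$; you additionally justify $\Ric=\frac12 b_\g$, which the paper simply cites.
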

\begin{proof}
Let $\{X_i\}_{i =1}^d$  be a local orthonormal frame of $T\K \P^2$. Summing the Gau\ss{}  equation over pairs
of vector fields $X_i, X_j, i\neq j$, the explicit form of the curvature tensor on the ambient sphere and minimality of the focal manifold gives
\begin{equation*}
d(d-1)  =  \mathrm{scal}_{g_\mathbb{K}}  + \sum_{i,j=1}^d\Big(|\mathrm{II}(X_i,X_j)|^2-\left\langle\mathrm{II}(X_j,X_j),\mathrm{II}(X_i,X_i)\right\rangle\Big)\!=\mathrm{scal}_{g_\mathbb{K}}  +  \sum_{i,j=1}^d|\mathrm{II}(X_i,X_j)|^2,
\end{equation*}
Plugging in  $\mathrm{II}(X,Y)=\sum_{i=0}^{d/2}\left\langle A_{\xi_i}X,Y\right\rangle\xi_i$, where $X,Y\in\Gamma(T\K \P^2)$ and $\{\xi\}_{i=0}^{d/2}$ is a local orthonormal frame of the normal bundle $T^\perp \K \P^2$, leads to
\begin{equation*}
\mathrm{scal}_{g_\mathbb{K}}
= d(d-1)  -   \sum_{i,j=1}^d\sum_{k=0}^{d/2}\left\langle A_{\xi_k}X_i,X_j\right\rangle^2
=d (d-1)  - \frac{d}{3}\left(\frac{d}{2}+1\right),
\end{equation*}
\noindent
where we used the  Clifford relations  \eqref{cliff} to compute the sum.
Hence, we obtain  $\mathrm{scal}_{g_\mathbb{C}}=8$, $\mathrm{scal}_{g_\mathbb{H}}=\frac{128}{3}$ and $\mathrm{scal}_{g_\mathbb{O}}=192$. Since on a symmetric space $G/K$ the scalar curvature of the metric defined by the negative Killing form on $\mathfrak{g}$ is given by $\frac{\mathrm{dim}(\K \P^2)}{2}$, we can now easily compute the constants given in the lemma.
\end{proof}


\begin{remark}
There are several nice geometric properties of the focal metrics $g_\K$. Firstly, their sectional curvatures are bounded below by $\frac{1}{3}$ and above by $\frac{4}{3}$ (see \cite{Li2016a}, Remark 2.2). 
\\Secondly, with the above normalizations it holds that $\lambda_1(\K \P^2)=\mathrm{dim}(\K \P^2)$, where $\lambda_1(\K \P^2)$ is the first non-zero eigenvalue of the Laplacian on functions 
(see \cite{Milhorat2023}, p. 2).
\end{remark}

\subsection{Representation Theory}
In this section we will briefly recall basics of harmonic analysis on homogeneous spaces and introduce the Casimir operator. As we will see, this simplifies the analytic problem of computing the spectrum of the Jacobi operator to a representation-theoretic task. 

Let $M=G/K$ be a homogeneous space where $G$ is a compact connected Lie group and $K$ a closed subgroup of $G$. Given a unitary representation $\pi:K\rightarrow\mathrm{Aut}(V)$, we can associate to the natural principal $K$-bundle $G\rightarrow M,\,g\mapsto gK$ the homogeneous vector bundle $VM:=G\times_{\pi} V$. On the space of $L^2$-sections $L^2(M,VM)$, the group $G$ acts via pullback, while on the space of $K$-equivariant $L^2$-functions
\begin{equation*}
L^2(G,V)_K:=\{f\in L^2(G,V)\:|\:f(gk)=\pi(k)^{-1}f(g),g\in G,k\in K\},
\end{equation*}
the group $G$ acts by left-translation, i.e., $(g\cdot f)(h)=f(g^{-1}h)$ for $g,h\in G$, $f\in L^2(G,V)_K$. These representations of $G$ can be canonically identified under the isomorphism
\begin{equation*}
L^2(M,VM)\rightarrow L^2(G,V)_K,\,\xi\mapsto\widehat{\xi}
\end{equation*}
given by $\xi(gK)=[g,\widehat{\xi}(g)]$ for $g\in G$.

Let $D(G)$ denote the set of equivalence classes of finite-dimensional irreducible complex representations of $G$ and denote the representatives by $\lambda:G\rightarrow\mathrm{Aut}(V_{\lambda})$. As a consequence of the Peter-Weyl theorem and Frobenius reciprocity we get
\begin{equation}
\label{Peter-Weyl}
L^2(M,VM)\cong\overline{\bigoplus_{\lambda\in D(G)}}V_{\lambda}\otimes\mathrm{Hom}_K(V_{\lambda},V),
\end{equation}
where $\mathrm{Hom}_K(V_{\lambda},V)$ denotes the space of $K$-equivariant maps, i.e., the space of all linear maps $L:V_{\lambda}\rightarrow V$ such that $\pi(k)\cdot L=L\cdot\lambda(k)$ for $k\in K$. 
%

\begin{definition}
\label{Definition Casimir Operators}
Let $b$ be an $\mathrm{Ad}(G)$-invariant inner product on $\g$ and let
$\{X_i\}_{i=1}^m$ be a $b$-orthonormal basis of $\mathfrak{g}$. For a representation $\pi:G\rightarrow\mathrm{Aut}(V)$ the endomorphism
\begin{equation*}
\mathrm{Cas}_{V}^{G,b}:=-\sum_{i}\big(d\pi(X_i)\big)^2
\end{equation*}
is called the \emph{Casimir operator of $\pi$ with respect to $b$}.
\end{definition}

Note that our choice of sign in Definition \ref{Definition Casimir Operators} makes $\mathrm{Cas}_{V}^{G,b}$ a positive definite operator. The Casimir operator is a $G$-equivariant endomorphism. Hence, 
by Schur's lemma, it acts on irreducible representations as a constant times the identity. 
The corresponding constants can be calculated by means of the root theory. Choose a maximal torus $\mathfrak{t}$ of $\mathfrak{g}$, make a choice of positive roots $\Sigma^{+}\subset\Sigma\subset(i\mathfrak{t})^{*}$ and define the half sum of positive roots 
$\rho:=\frac{1}{2}\sum_{\alpha\in\Sigma^{+}}\alpha$. The inner product $b$ on $\mathfrak{g}$ induces a (positive definite) inner product on $(i\mathfrak{t})^{*}$, which we will also denote by $b$. Let $V_\lambda$ be an irreducible representation of highest weight $\lambda \in (i\mathfrak{t})^{*}$.
Then, by the well-known \emph{Freudenthal formula},
\be\label{freudenthal}
\mathrm{Cas}_{V_{\lambda}}^{G,b}
=c_{\lambda}\,\mathrm{Id}_{V_{\lambda}} \:\:\text{ with }\:\: c_{\lambda}:=b(\lambda,\lambda+2\rho) .
\ee
Note that $\mathrm{Cas}_{V}^{G,c\, b} = \frac{1}{c} \, \mathrm{Cas}_{V}^{G,b}$. This can be seen from \eqref{freudenthal} or directly from Definition
\ref{Definition Casimir Operators}.

%
%
%
%
%

The Casimir operator of the adjoint representation (with respect to minus the Killing form) acts as identity.
This can of course be shown without using \eqref{freudenthal}.
Moreover, we have the so-called \textit{strange formula} of  Freudenthal and de Vries  (\cite{Freudenthal1969}, 47.11)
\begin{equation}
\label{Strange Formula}
b_\g(\rho,\rho)=\frac{\mathrm{dim}(\mathfrak{g})}{24}.
\end{equation}
Recall that $b_\g$ is our notation for the negative Killing form of $\g$.

%
%
%
%

\medskip

Let $M=G/K$ be a normal homogeneous space (still with $G$ compact), that is, $M$ is a homogeneous space with reductive decomposition $\mathfrak{g}=\mathfrak{k}\oplus\mathfrak{m}$ such that the metric $g$ on $M$ is determined by an $\mathrm{Ad}(G)$-invariant inner product $b$ on $\mathfrak{g}$. Denote by $\overline{\nabla}$ the canonical connection of $M$ (which in general differs from the Levi-Civita connection of $M$ by a torsion term). We introduce the \emph{standard Laplacian} $\bar{\Delta}$ (see \cite{Semmelmann2019}) acting on $\Gamma(VM)$ by
\be\label{standard}
\bar{\Delta}=\bar{\nabla}^{*}\bar{\nabla}+q(\bar{R}),
\ee
where $q(\bar{R}) = \sum_{i<j} d\pi (X_i \wedge X_j) \circ d\pi (\bar{R}(X_i, X_j))
$,
for an orthonormal basis $\{X_i\}$, is the curvature endomorphism on $VM$.
The next proposition relates the Laplace operator $\bar \Delta$ to the Casimir operator (see \cite{MS10}, Lemma 5.2).

\begin{proposition}
\label{Laplace is Casimir}
Let $M=G/K$ be a normal homogeneous space with a compact Lie group $G$, that has Lie algebra $\mathfrak{g}$ with an $\mathrm{Ad}(G)$-invariant inner product $b$. Then,
\begin{equation*}
\bar{\Delta}=\mathrm{Cas}_{\Gamma(VM)}^{G,b}\quad\text{and}\quad q(\bar{R})=\mathrm{Cas}_{V}^{K,b}=\mathrm{Cas}_{\Gamma(VM)}^{K,b}.
\end{equation*}
\end{proposition}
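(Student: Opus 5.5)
The plan is to prove both identities by working on $L^2(G,V)_K$ via the isomorphism $\xi\mapsto\widehat{\xi}$. Everything will be expressed through left-invariant vector fields on $G$. Fix a $b$-orthonormal basis $\{X_i\}$ of $\mathfrak{m}$ and $\{Y_a\}$ of $\mathfrak{k}$; together they form a $b$-orthonormal basis of $\mathfrak{g}$, since the reductive decomposition is $b$-orthogonal for a normal metric.

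\textbf{Step 1: the $G$-action.} The $G$-action on $L^2(G,V)_K$ is by left translation, so its differential is given by right-invariant vector fields. The Casimir of $\mathfrak{g}$, built from right-invariant fields, equals the one built from left-invariant fields by $\mathrm{Ad}$-invariance of $b$. Hence
\[
\mathrm{Cas}^{G,b}_{\Gamma(VM)}\widehat{\xi}=-\sum_i X_i(X_i\widehat{\xi})-\sum_a Y_a(Y_a\widehat{\xi}),
\]
where $X$ now denotes the left-invariant field acting by differentiation.

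\textbf{Step 2: the $\mathfrak{k}$-part.} By $K$-equivariance, $Y_a\widehat{\xi}=-d\pi(Y_a)\widehat{\xi}$. Therefore
\[
-\sum_a Y_aY_a\widehat{\xi}=-\sum_a d\pi(Y_a)^2\widehat{\xi}=\mathrm{Cas}^{K,b}_V\widehat{\xi}.
\]
This establishes $\mathrm{Cas}^{K,b}_{\Gamma(VM)}=\mathrm{Cas}^{K,b}_V$, understood as the pointwise action of $K$ on the fibres.

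\textbf{Step 3: the $\mathfrak{m}$-part.} The canonical connection corresponds to differentiation along horizontal left-invariant fields: $\widehat{\bar{\nabla}_X\xi}=X\widehat{\xi}$ for $X\in\mathfrak{m}$. So $-\sum_i X_iX_i\widehat{\xi}$ computes $\bar{\nabla}^*\bar{\nabla}$. Here one must check that $\bar{\nabla}_{X_i}X_i=0$ in the lift; this holds because the canonical connection's geodesics are orbits of one-parameter subgroups $\exp(tX)$. The check also uses that the trace of the torsion vanishes, which holds for normal metrics since $\sum_i[X_i,X_i]_{\mathfrak{m}}=0$.

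\textbf{Step 4: the curvature term.} It remains to identify $q(\bar{R})$ with $\mathrm{Cas}^{K,b}_V$. The canonical curvature is $\bar{R}(X,Y)=-\mathrm{ad}([X,Y]_{\mathfrak{k}})$ acting through $d\pi$. Expanding gives
\[
q(\bar{R})=-\sum_{i<j}d\pi(X_i\wedge X_j)\,d\pi([X_i,X_j]_{\mathfrak{k}}).
\]
Rewrite $[X_i,X_j]_{\mathfrak{k}}=\sum_a b([X_i,X_j],Y_a)Y_a=\sum_a b(X_j,[Y_a,X_i])Y_a$ by $\mathrm{Ad}$-invariance. Then $\sum_{i<j}b([Y_a,X_i],X_j)\,X_i\wedge X_j$ is exactly $\mathrm{ad}(Y_a)|_{\mathfrak{m}}$ viewed in $\Lambda^2\mathfrak{m}\cong\mathfrak{so}(\mathfrak{m})$, whose image under $d\pi$ is $d\pi(Y_a)$, because the representation $\pi$ of $K$ is the one defining $VM$. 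This yields $q(\bar{R})=-\sum_a d\pi(Y_a)^2=\mathrm{Cas}^{K,b}_V$.

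Combining Steps 1–4 gives $\bar{\Delta}=\bar{\nabla}^*\bar{\nabla}+q(\bar{R})=\mathrm{Cas}^{G,b}$. The main obstacle is this last identification of $q(\bar{R})$. Signs and factors of $2$ from $i<j$ versus full sums, and the convention identifying $\Lambda^2$ with $\mathfrak{so}$, must be tracked carefully. The statement also implicitly requires that $\pi$ factors through the isotropy representation (so that $d\pi(X\wedge Y)$ makes sense) or, more generally, that $q(\bar{R})$ is defined via the holonomy algebra $\mathfrak{k}$. I would first verify the sign on the case of functions, where both sides are zero, and then on $V=\mathfrak{m}$ with the Killing form, where both sides should give the known constant.
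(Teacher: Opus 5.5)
Your proposal is correct. It is essentially the standard proof of the result the paper cites here without proof (Moroianu--Semmelmann, Lemma 5.2): split $\mathrm{Cas}^{G,b}$ along $\mathfrak{g}=\mathfrak{k}\oplus\mathfrak{m}$ on equivariant functions; identify the $\mathfrak{m}$-part with $\bar\nabla^*\bar\nabla$ via horizontal differentiation, using that the canonical torsion is trace-free for normal metrics; identify the $\mathfrak{k}$-part with $\mathrm{Cas}^{K,b}_V$ via equivariance; and match $q(\bar R)$ with $\mathrm{Cas}^{K,b}_V$ using $\bar R(X,Y)=-d\pi([X,Y]_{\mathfrak{k}})$. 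Your expansion of $\mathrm{ad}(Y_a)|_{\mathfrak{m}}$ in the basis $X_i\wedge X_j$ holds for the convention $(X\wedge Y)Z=b(X,Z)Y-b(Y,Z)X$, which is the one the paper uses when it writes the sphere's curvature as $-(X\wedge Y)$; your caveat that $d\pi(X_i\wedge X_j)$ needs $\pi$ to factor through $\mathfrak{so}(\mathfrak{m})$ (or a holonomy-based definition of $q(\bar R)$) is apt, and it is harmless in the paper's application, since the three normal bundles are subbundles of tensor bundles.
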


To be more precise, we take the restricted inner product $b|_{\k}$ for the Casimir operator of the subgroup $K$. However, we will suppress this notation for better readability. Also, note that (\ref{Peter-Weyl}) does not hold for real representations $\pi:K\rightarrow\mathrm{Aut}(V)$, but instead we have to take the complexification $V^{\mathbb{C}}$ to apply the theory. This is, however, not a problem since all eigenvalues of $\bar{\Delta}=\mathrm{Cas}_{\Gamma(VM)}^{G,b}$ are real, so we can just take the real part of complex eigensections of $\bar{\Delta}$.

\medskip

\section{Computation of Index and Nullity}
\label{Computation of the Index}

In this section we come to the computation of index and nullity of all orientable cubic focal manifolds. The Jacobi operator has a particular nice form in our case and simplifies to a shifted Casimir operator. We can then use standard representation theoretic tools to calculate its eigenvalues and prove Theorem \ref{Main Theorem}.

\subsection{The Jacobi Operator}
\label{The Jacobi Operator}

We begin with the following two short calculations of the curvature and the second fundamental form terms in the Jacobi operator. Take ${\xi\in\Gamma(T^\perp\K \P^2)}$ and let $\{X_i\}$ be a local orthonormal frame of $T\K \P^2$. Since the curvature of the ambient sphere $\mathbb{S}^n$ is simply given by the formula $\overline{R}(X,Y)Z=-(X\wedge Y)Z$ for tangent vectors $X,Y,Z$, we can compute
\begin{equation*}
\mathrm{Ric}^{\perp}(\xi)=\sum_{i=1}^d(\overline{R}(X_i,\xi)X_i)^{\perp}
=-\sum_{i=1}^d\big((X_i\wedge\xi)X_i\big)^{\perp} =  -\sum_{i=1}^d |X_i|^2\xi =-d   \xi.
\end{equation*}
From the fact that the (scaled) shape operators form a Clifford system (see \eqref{cliff}), we get
\begin{equation*}
\mathcal{A}(\xi)=\sum_{i=1}^d\mathrm{II}(A_{\xi}X_i,X_i)=\sum_{i=1}^d\sum_{k=0}^{d/2}\left\langle A_{\xi_k}A_{\xi}X_i,X_i\right\rangle\xi_k=\frac{d}{3}\xi.
\end{equation*}
To treat the normal Laplacian, we 
use \eqref{standard} and Proposition \ref{Laplace is Casimir} to  rewrite 
$$
-\Delta^{\perp} = (\nabla^{\perp})^{*}\nabla^{\perp}=\mathrm{Cas}_{\Gamma(T^\perp \K \P^2)}^{G,b_{\mathbb{K}}}-\mathrm{Cas}_{\Gamma(T^\perp \K \P^2)}^{K,b_{\mathbb{K}}},
$$ where we take the Casimir operators with respect to the scalar product $b_{\mathbb{K}}$ on $\mathfrak{g}$ as determined in 
Lemma \ref{Focal Metrics and Normal Bundles}. In particular, the Casimir operator of $K$ is also taken with respect to $b_\K$, but restricted to the Lie algebra $\k$. Note that we can apply Proposition~\ref{Laplace is Casimir} since on symmetric spaces the  Levi-Civita connection and the canonical connection coincide. To summarize, we have obtained the following form of the Jacobi operator:
\begin{equation}
\label{Expanded Jacobi Operator}
\mathcal{J}=\mathrm{Cas}_{\Gamma(T^\perp\K \P^2)}^{G,b_{\mathbb{K}}}-\mathrm{Cas}_{\Gamma(T^\perp \K \P^2)}^{K,b_{\mathbb{K}}}-\frac{4}{3}d\,\mathrm{Id}\quad\text{on}\:\,\Gamma(T^\perp \K \P^2).
\end{equation}
From now on, the task of computing the spectrum of $\mathcal{J}$ is a purely algebraic one. 
However, we still have the following simplified expression for the Jacobi operator.

\begin{proposition}
\label{Simplified Jacobi Operator}
For each orientable cubic focal manifold $\K \P^2=G/K $ of dimension $d=4,8,16$, the Jacobi operator on $\Gamma(T^\perp \K \P^2)$ is given by
\begin{equation*}
\mathcal{J}=\mathrm{Cas}_{\Gamma(T^\perp\K \P^2)}^{G,b_{\mathbb{K}}}-2d \, \mathrm{Id} .
\end{equation*}
\end{proposition}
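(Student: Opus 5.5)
The plan is to compare the proposition with \eqref{Expanded Jacobi Operator}. It then suffices to show that the Casimir operator of $K$ acts on the normal bundle as the constant
\begin{equation*}
\mathrm{Cas}_{\Gamma(T^\perp \K \P^2)}^{K,b_{\mathbb{K}}}=\frac{2}{3}\,d\,\mathrm{Id},
\end{equation*}
because $-\frac{2}{3}d-\frac{4}{3}d=-2d$. By Proposition \ref{Laplace is Casimir} this operator equals $\mathrm{Cas}_{W}^{K,b_{\mathbb{K}}}$, the pointwise Casimir of the slice representation $W$ from \eqref{slice representations}. In each case $W$ is irreducible under the simple factor listed there, and the remaining factor of $K$ acts trivially: the center $\mathrm{U}(1)$ of $\mathrm{U}(2)$ for $\C\P^2$, and the $\mathrm{Sp}(1)$ factor for $\H\P^2$. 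Hence, by Schur's lemma, $\mathrm{Cas}_W^{K,b_\K}$ is a scalar, and the trivially acting factors contribute nothing to it. I would therefore reduce everything to computing one Casimir eigenvalue per case.

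To compute this scalar, I would pass from $b_\K$ to the negative Killing form $b_{\k_0}$ of the simple ideal $\k_0$ acting on $W$, in two steps.
\begin{itemize}
\item[(1)] Use Lemma \ref{Focal Metrics and Normal Bundles} to write $b_\K=c_\K\, b_\g$, with $c_\C=\frac14$, $c_\H=\frac{3}{32}$ and $c_\O=\frac1{24}$.
\item[(2)] Determine the ratio $b_\g|_{\k_0}=r\, b_{\k_0}$. This can be done with trace forms. For example, $B_{\su(3)}(X,X)=6\,\mathrm{tr}X^2$ and $B_{\su(2)}(X,X)=4\,\mathrm{tr}X^2$, so $r=\frac32$. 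Analogously, $\sp(2)\subset\sp(3)$ gives $r=\frac{4}{3}$ (Killing form $2(n+1)\mathrm{tr}$), and for $\spin(9)\subset\f4$ one uses the Dynkin index of the embedding, which gives $r=\frac{9}{7}$ (dual Coxeter numbers $7$ and $9$, with equal long-root normalizations).
\end{itemize}
Then $\mathrm{Cas}_W^{K,b_\K}=\frac{1}{c_\K r}\,\mathrm{Cas}_W^{\k_0,b_{\k_0}}$.

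The last scalar comes from Freudenthal's formula \eqref{freudenthal}, or from standard facts. For $\C\P^2$, $W$ is the adjoint representation of $\su(2)$, so its Casimir is $1$. For $\H\P^2$, $W$ is the vector representation of $\mathfrak{so}(5)\cong\sp(2)$. For $\O\P^2$, $W$ is the vector representation of $\mathfrak{so}(9)$. The vector representation of $\mathfrak{so}(m)$ has Casimir $\frac{m-1}{2(m-2)}$ with respect to the negative Killing form, which gives $\frac{2}{3}$ for $m=5$ and $\frac{4}{7}$ for $m=9$. As a check in the first case, $4\cdot\frac{2}{3}\cdot 1=\frac83=\frac{2}{3}\cdot 4$. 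For the others one gets $\frac{32}{3}\cdot\frac34\cdot\frac23=\frac{16}{3}=\frac23\cdot 8$ and $24\cdot\frac79\cdot\frac47=\frac{32}{3}=\frac23\cdot16$, as required.

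The main obstacle is getting the normalization constant $r$ right, especially for $\spin(9)\subset\f4$. I would double-check it independently with the strange formula \eqref{Strange Formula}, or via the symmetric-space identity $\mathrm{Cas}^{K,b_\g}_{\mathfrak m}=\frac12$. The same identity offers an alternative route that avoids case distinctions. The isotropy representation $\mathfrak m\cong T\K\P^2$ and $W$ are linked through the Clifford system \eqref{cliff}: $W$ embeds into $\mathrm{Sym}^2_0\mathfrak m$ via $\xi\mapsto A_\xi$. From this one might compare the two Casimirs directly, but the case-by-case check above is what I would carry out.
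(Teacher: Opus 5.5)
Your proposal is correct and follows essentially the same route as the paper. Both use \eqref{Expanded Jacobi Operator} and Proposition~\ref{Laplace is Casimir} to reduce to showing $\mathrm{Cas}^{K,b_\K}_W=\frac23 d\,\mathrm{Id}$ on the irreducible slice representation, then rescale $b_\K$ to the Killing form of the simple factor (obtaining the same constants $\frac38$, $\frac18$, $\frac{3}{56}$) and evaluate the Casimir case by case. The only differences are cosmetic: you use the closed formula $\frac{m-1}{2(m-2)}$ for the vector representation of $\mathfrak{so}(m)$ and dual Coxeter numbers for $\mathfrak{spin}(9)\subset\mathfrak{f}_4$, whereas the paper uses Freudenthal's formula with explicit weights and the strange formula, and both give the same ratio $\frac97$.
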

\begin{proof}
We have to calculate the Casimir operator of $K$ with respect to $b_{\mathbb{K}}|_\k$. From  Proposition \ref{Laplace is Casimir}
we know that $\mathrm{Cas}_{\Gamma(T^\perp \K \P^2)}^{K,b_\K} = \mathrm{Cas}_{W}^{K,b_\K}$, where $W$ is the (irreducible) slice representation 
on the normal bundle. Hence, the Casimir acts as a multiple of the identity on $\Gamma(T^\perp\K \P^2)$. The constant can be determined by using the Freudenthal formula.

\medskip

\noindent
\textbf{Case $\C \P^2$:} The slice representation of the complex projective plane $\C \P^2=\frac{\mathrm{SU}(3)}{\mathrm{U}(2)}$ is  the adjoint representation ${\pi:\mathrm{SU}(2)\rightarrow\mathrm{Aut}(\mathfrak{su}(2))}$. 
We already know that the Casimir operator with respect to $b_{\su(2)}$  acts as identity and from Lemma  
\ref{Focal Metrics and Normal Bundles}  we have $b_\C = \frac14 b_{\su(3)}$. It remains to compute the restriction of the Killing form of $\su(3)$ to $\su(2)$.
This can be done using \eqref{Strange Formula} or by noting that the Killing form of $\su(n)$ is $2n$ times the trace form. Hence,
$b_\C = \frac14 b_{\su(3)} =  \frac38 b_{\su(2)}$ and  we obtain
$
\mathrm{Cas}_{\mathfrak{su}(2)}^{\mathrm{SU}(2),b_{\mathbb{C}}}
=\frac{8}{3}\mathrm{Cas}_{\mathfrak{su}(2)}^{\mathrm{SU}(2),b_{\su(2)}}
=\frac{8}{3}\mathrm{Id}
=\frac{2}{3}d\,\mathrm{Id} .
$

\medskip

\noindent
\textbf{Case $\H \P^2$:} The slice representation of the quaternionic projective plane $\H \P^2=\frac{\mathrm{Sp}(3)}{\mathrm{Sp}(2)\cdot\mathrm{Sp}(1)}$ is ${\pi:\mathrm{Sp}(2)\rightarrow\mathrm{Aut}(\Lambda^2_0(\mathbb{C}^4))_{\mathbb{R}}}$ 
(the $\mathrm{Sp}(1)$ part acts trivially). 
We want to calculate the Casimir operator with respect to $b_\H|_{\sp(2)}$ using the Freudenthal formula \eqref{freudenthal}.
Then $b_\H|_{\sp(2)} = \frac{3}{32} b_{\sp(3)}|_{\sp(2)} = \frac18  b_{\sp(2)}$ by Lemma \ref{Focal Metrics and Normal Bundles}  
and the fact that the Killing form of $\sp(n)$ is $2(n+1)$ times the trace form. The inner product induced by $b_{\sp(2)}$ on
$(i \mathfrak{t})^* \cong \R^2$ differs from the standard scalar product $\left\langle\cdot,\cdot\right\rangle$ on $\R^2$ by a factor $c \in \R$. This factor can be determined from the Freudenthal de Vries formula \eqref{Strange Formula}. 
Indeed, the highest weight of the representation $\pi$ in terms of fundamental weights of $\mathfrak{sp}(2)$ is $\pi=\omega_2=\left(\begin{smallmatrix} 1\\ 1\\ \end{smallmatrix}\right)$ and furthermore, $\rho=\left(\begin{smallmatrix} 2\\ 1\\ \end{smallmatrix}\right)$, i.e., $\langle \rho, \rho \rangle = 5 $. Thus, 
$
b_{\sp(2)} (\rho, \rho) = 5 c = \frac{\mathrm{dim}(\mathfrak{sp}(2))}{24}=\frac{5}{12}
$,
i.e. $c = \frac{1}{12}$. 
The details on root systems and fundamental weights
for simple Lie algebras are taken from  \cite{Knapp2002}, Appendix C.
Substituting all this into the Freudenthal formula \eqref{freudenthal}, we obtain
\begin{equation*}
\mathrm{Cas}_{(\Lambda_0^2(\mathbb{C}^4))_{\mathbb{R}}}^{\mathrm{Sp}(2),b_\H}=\frac{8}{12}\left\langle\pi,\pi+2\rho\right\rangle=\frac{2}{3}\left\langle\begin{pmatrix}
1\\1\\ \end{pmatrix},\begin{pmatrix}
4\\2\\ \end{pmatrix}+\begin{pmatrix}
1\\1\\ \end{pmatrix}\right\rangle\mathrm{Id}=\frac{16}{3}\mathrm{Id} = \frac{2}{3}d\,\mathrm{Id}.
\end{equation*}

\medskip

\noindent
\textbf{Case $\O \P^2$:} The slice representation of the Cayley plane $\O \P^2=\frac{\mathrm{F}_4}{\mathrm{Spin}(9)}$ is the standard representation ${\pi:\mathrm{Spin}(9)\rightarrow\mathrm{Aut}(\mathbb{R}^9)}$. 
The highest weight of this representation in terms of fundamental weights of $\mathfrak{spin}(9)$ is 
$\pi=\omega_1= (\frac12, \frac12, \frac12, -\frac12)^T$ and $\rho = (4,2,1,0)^T$, i.e. $\langle \rho, \rho \rangle = 21$. 
The inner product on $(i \mathfrak{t}_{\spin(9)})^* \cong \R^4$ induced by $b_{\spin(9)}$ differs from the standard scalar product $\left\langle\cdot,\cdot\right\rangle$ on $\R^4$ by a constant $c \in \R$ . Using \eqref{Strange Formula}, we find ${21 c = \frac{\mathrm{dim}(\mathfrak{spin}(9))}{24}=\frac{3}{2}}$, which gives  $c=\frac{1}{14}$. 
%
%
%

We also have to compare  the restriction of $b_\O = \frac{1}{24} b_{\f4}$ to $\spin(9)$ with $b_{\spin(9)}$. This time we have to use  \eqref{Strange Formula}. 
We can choose the maximal torus of  $\mathrm{F}_4$ to coincide with $\mathfrak{t}_{\spin(9)}$. Then the half-sum $\rho$ of the positive roots of $\f4$ is given by 
$
\rho = (\frac{11}{2}, \frac{5}{2}, \frac{3}{2}, \frac{1}{2})^T
$
with $\langle \rho, \rho \rangle = 39$ and \eqref{Strange Formula}
shows that the inner product induced by $b_{\f4}$ on $(i \mathfrak{t}_{\spin(9)})^* \cong \R^4$ is $\frac{1}{18}$ times the
standard scalar product of $\R^4$. It follows that $b_{\f4}|_{\spin(9)} = \frac{9}{7} b_{\spin(9)}$ and thus
$b_{\O}|_{\spin(9)} = \frac{3}{56} b_{\spin(9)}$. Note that we have to take the reciprocal, since we compare the induced inner products on the dual space $(i \mathfrak{t}_{\spin(9)})^*$. Substituting these scalings into the Freudenthal formula  \eqref{freudenthal}  gives
\begin{equation*}
\mathrm{Cas}^{\mathrm{Spin}(9),b_\O}_{\mathbb{R}^9}
=\frac{14}{3} \left\langle\pi,\pi+2\rho\right\rangle=\frac{14}{3}\left\langle\begin{pmatrix}
\;\; 1/2\\ \;\; 1/2\\ \;\;1/2\\-1/2\\ \end{pmatrix},\begin{pmatrix}
8\\4\\2\\0\\ \end{pmatrix}+\begin{pmatrix}
\;\;1/2\\ \;\; 1/2\\ \;\; 1/2\\-1/2\\ \end{pmatrix}\right\rangle\mathrm{Id}
=\frac{32}{3}\mathrm{Id} 
=  \frac{2}{3}d\,\mathrm{Id} .
\end{equation*}
%
%
%
In each of the three cases the formula for $\mathcal{J}$ then follows from (\ref{Expanded Jacobi Operator}).
\end{proof}

\medskip

\begin{remark}
The above proposition can also be proven in a more geometric way: by Proposition \ref{Laplace is Casimir}
the Casimir operator of $K$ coincides with the curvature operator $q(R)$ of the three normal bundles. In the
case of $\C \P^2$ the normal bundle is a subbundle of the bundle of $2$-forms. For $\H \P^2$ and $\O \P^2$
the normal bundles are subbundles of the bundle of symmetric $2$-tensors. On both tensor bundles we have $q(R) = 2 \mathcal R + \Ric $, where $\Ric$ acts as derivation and $ \mathcal R$ is
a curvature operator whose eigenvalues  can be computed easily.
%
%
%
\end{remark}

\subsection{Proof of Theorem \ref{Main Theorem}}
\label{Proof of Theorem}

As a consequence of Proposition \ref{Simplified Jacobi Operator}, the remaining step is to find
all the representations $\lambda\in D(G)$ that have Casimir eigenvalue less than or equal to $2d$ and to compute their multiplicities. Up to the right scaling, we can take these values from the literature. This yields our main result.

\begin{proof}[Proof of Theorem \ref{Main Theorem}]
By (\ref{Peter-Weyl}) and Proposition \ref{Simplified Jacobi Operator}, we have the formula
\begin{equation*}
\mathrm{Ind}(M)=\sum_{\lambda\in D(G),\,c_{\lambda}<2d}m_{\lambda}\cdot\mathrm{dim}(V_{\lambda}),
\end{equation*}
where $m_{\lambda}:=\mathrm{dim}\,\mathrm{Hom}_K(V_{\lambda},W^{\mathbb{C}})$ and $\pi:K\rightarrow\mathrm{Aut}(W)$ is again the slice representation of $\K \P^2=G/K\subset\mathbb{S}^n$. An  analogous formula with 
$c_\lambda = 2d$, holds for the nullity. Thus, we have to determine all representations $\lambda\in D(G)$ for all $\K \P^2$ with Casimir eigenvalue less or equal to $2d$. From Proposition \ref{Laplace is Casimir} we know that the Casimir operator 
coincides with the standard Laplace operator on the three normal bundles. Moreover, the standard Laplacian commutes with parallel
bundle maps and only depends on the representation defining the bundle. Hence, we can use spectral computations for the
Hodge-Laplace operator from the literature, if the slice representations (\ref{slice representations}) appear.

\medskip

\noindent
\textbf{Case $\C \P^2$:} In \cite{Ikeda1978}, the eigenvalues of the Laplacian of $\C \P^n$ on $\Lambda_0^{p,q}$ have been calculated.
In our case, we have $p=q=1$ and $n=2$. The calculations on p.532-533 give the following possible representations and corresponding eigenvalues of $\mathrm{Cas}_{\lambda}^{\mathrm{SU}(3),b_{\mathbb{C}}}$:

\bgroup
\def\arraystretch{1.5}
\begin{table}[H]
\!\!\!\!\!\!\!\!\begin{tabular}{|c|c|}
\hline
$\mathrm{SU}(3)$-representation $\lambda$ & Casimir eigenvalue $\mathrm{Cas}_{\lambda}^{\mathrm{SU}(3),b_{\mathbb{C}}}$ \\ \hline \hline
$(k+1)\omega_1+(k+1)\omega_2$\quad($k\geq0$) & $\frac{4}{3}(k+1)(k+3)$ \\ \hline
$(k-1)\omega_1+(k+2)\omega_2$\quad($k\geq1$) & $\frac{4}{3}(k+1)(k+2)$ \\ \hline
$(k+3)\omega_1+k\omega_2$\quad($k\geq0$) & $\frac{4}{3}(k+2)(k+3)$ \\ \hline
\end{tabular}
\end{table}
\egroup

All these representations $\lambda$ have multiplicity $m_{\lambda}=1$. Note that the eigenvalues in \cite{Ikeda1978} 
are calculated with respect to the Fubini-Study metric, i.e. the one that is induced by $\frac{1}{3}b_{\su(3)}$. Since
$b_\C = \frac14 b_{\su(3)}$ we have to scale the numbers in \cite{Ikeda1978}   by the factor $\frac43$. Thus, the representations that contribute to the nullity are precisely $3\omega_2$ ($k=1$ in the second family of representations in the above table) and $3\omega_1$ ($k=0$ in the third family). The single representation that contributes to the index is $\omega_1+\omega_2$ ($k=0$ in the first family). The result follows from $\mathrm{dim}(V_{3\omega_2})=10$, $\mathrm{dim}(V_{3\omega_1})=10$, $\mathrm{dim}(V_{\omega_1+\omega_2})=8$ and 
\begin{equation*}
\mathrm{Nul}_{\mathrm{K}}(\C \P^2)=\mathrm{dim}(\mathrm{SO}(8))-\mathrm{dim}(\mathrm{SU}(3))=28-8=20.
\end{equation*}

\medskip

\noindent
\textbf{Case $\H \P^2$:} In \cite{Tsukamoto1981}, the eigenvalues of the Laplacian of $\H \P^n$ on $\Lambda_0^p$ have been calculated.
In our case, we have $p=2$ and $n=2$. The table on p.\ 425 gives the following possible representations and corresponding eigenvalues of $\mathrm{Cas}_{\lambda}^{\mathrm{Sp}(3),b_{\mathbb{H}}}$:

\bgroup
\def\arraystretch{1.5}
\begin{table}[H]
\!\!\!\!\!\!\!\!\begin{tabular}{|c|c|}
\hline
$\mathrm{Sp}(3)$-representation $\lambda$ & Casimir eigenvalue $\mathrm{Cas}_{\lambda}^{\mathrm{Sp}(3),b_{\mathbb{H}}}$ \\ \hline \hline
$k\omega_2$\quad($k\geq1$) & $\frac{4}{3}k(k+5)$ \\ \hline
$\omega_1+k\omega_2+\omega_3$\quad($k\geq0$) & $\frac{4}{3}(k^2+8k+12)$ \\ \hline
\end{tabular}
\end{table}
\egroup
All these representations $\lambda$ have multiplicity $m_{\lambda}=1$. The eigenvalues in \cite{Tsukamoto1981} 
are computed with respect to the metric induced by $b_{\sp(3)}$. But since $b_\H = \frac{3}{32}b_{\sp(3)}$ we have to scale
the numbers in \cite{Tsukamoto1981} by the factor $\frac{32}{3}$ to get the right result.
%
There is a small computational mistake in \cite{Tsukamoto1981} regarding the eigenvalue of the $(-16)\cdot$Casimir operator on the irreducible $\mathrm{Sp}(3)$-module $I(r,s,t)$: it should be $12t$ instead of $6t$. Thus, the representation that contributes to the index is $\omega_2$ ($k=1$ in the first family), and the representation that contributes to the nullity is $\omega_1+\omega_3$ ($k=0$ in the second family). The result follows from $\mathrm{dim}(V_{\omega_2})=14$, $\mathrm{dim}(V_{\omega_1+\omega_3})=70$ and the fact that
\begin{equation*}
\mathrm{Nul}_{\mathrm{K}}(\H \P^2)=\mathrm{dim}(\mathrm{SO}(14))-\mathrm{dim}(\mathrm{Sp}(3))=91-21=70.
\end{equation*}

\medskip

\noindent
\textbf{Case $\O \P^2$:} In \cite{Mashimo1997}, the eigenvalues of the Laplacian of $\O \P^2$ on $\Lambda^p$ have been calculated.
In our case we have $p=1$. Theorem 6 and the table on p.\ 393 give the following possible representations and corresponding eigenvalues of $\mathrm{Cas}_{\lambda}^{\mathrm{F}_4,b_{\mathbb{O}}}$:

\medskip


\bgroup
\def\arraystretch{1.5}
\begin{table}[H]
\!\!\!\!\!\!\!\!\begin{tabular}{|c|c|}
\hline
$\mathrm{F}_4$-representation $\lambda$ & Casimir eigenvalue $\mathrm{Cas}_{\lambda}^{\mathrm{F}_4,b_{\mathbb{O}}}$ \\ \hline \hline
$k\omega_4$\quad($k\geq1$) & $\frac{4}{3}(k^2+11k)$ \\ \hline
$\omega_3+k\omega_4$\quad($k\geq0$) & $\frac{4}{3}(k^2+14k+24)$ \\ \hline
\end{tabular}
\end{table}
\egroup
All these representations $\lambda$ have multiplicity $m_{\lambda}=1$. Again, we have to be careful, because the eigenvalues in \cite{Mashimo1997} 
are computed with respect to the metric induced by $\frac{1}{18} b_{\f4}$. But since $b_\O = \frac{1}{24} b_{\f4}$ we have to
scale the numbers in \cite{Mashimo1997}  by the factor $\frac43$.
%
Thus, the representation that contributes to the index is $\omega_4$ ($k=1$ in the first family), and the representation that contributes to the nullity is $\omega_3$ ($k=0$ in the second family). The result follows from $\mathrm{dim}(V_{\omega_4})=26$, $\mathrm{dim}(V_{\omega_3})=273$ and the fact that
\begin{equation*}
\mathrm{Nul}_{\mathrm{K}}(\O \P^2)=\mathrm{dim}(\mathrm{SO}(26))-\mathrm{dim}(\mathrm{F}_4)=325-52=273.
\end{equation*}

In each of the three cases we see that there is always exactly one  $G$-repr\-esentation, which contributes 
to the index and this representation has Casimir eigenvalue $d$ and dimension $\frac32d + 2= n+1$. This proves 
the first statement  of Theorem \ref{Main Theorem}. The second statement follows by collecting the three computations of 
the nullities.
\end{proof}

\bibliographystyle{plain}

\end{document}